\documentclass[11pt]{article}
\usepackage[a4paper,margin=1in]{geometry}
\usepackage{amsmath,amssymb,amsthm,mathtools}
\usepackage{tikz,pgfplots}
\pgfplotsset{compat=1.18}
\usetikzlibrary{arrows.meta}
\usepackage[T1]{fontenc}
\usepackage{lmodern}
\usepackage{booktabs}
\usepackage{microtype}
\usepackage[hidelinks]{hyperref}
\usepackage{enumitem}

\newtheorem{theorem}{Theorem}[section]
\newtheorem{proposition}[theorem]{Proposition}

\newtheorem{corollary}[theorem]{Corollary}
\theoremstyle{definition}
\newtheorem{definition}[theorem]{Definition}
\theoremstyle{remark}
\newtheorem{remark}[theorem]{Remark}

\newcommand{\Sone}{\mathbb S^1}

\newcommand{\dd}{\,d}
\newcommand{\Lip}{\operatorname{Lip}}

\newcommand{\st}{\operatorname{st}}
\newcommand{\Wone}{W_1}
\allowdisplaybreaks[1]
\hypersetup{pdftitle={Gauss--Chebyshev Quadrature, Circle Designs, and the Planar Minkowski Inverse Problem},pdfauthor={Congpei An}}

\title{Gauss--Chebyshev Quadrature, $\mathbb S^1$-Designs, and the Planar Minkowski Inverse Problem\\
\large A One-Dimensional Model for Design-Induced Convex Geometry}
\author{Congpei An\thanks{Email: \href{mailto:andbachcp@gmail.com}{andbachcp@gmail.com}}}
\date{}

\begin{document}
\maketitle

\begin{abstract}
We study the relation between equal-weight circle quadrature, Gauss--Chebyshev quadrature, and planar Minkowski reconstruction. The regular $N$-point grid is an $\mathbb S^1$-design of strength $N-1$; a half-step rotation of the $2M$-point grid projects to the classical $M$-point Gauss--Chebyshev rule. A common family of quadrature-error functionals gives exact identities for integration error, mixed-area error, and support-function error. For $N\ge3$, the centered Minkowski polygon $P_N$ with perimeter $2\pi$ has the exact Hausdorff distance from the unit disk $B^2$
\[
 d_H(P_N,B^2)=1-\frac{\pi}{N}\cot\frac{\pi}{N}
 =\frac{\pi^2}{3N^2}+O(N^{-4}).
\]
With Steiner centering, we obtain $d_H(K_\mu,K_\nu)\le\pi W_1(\mu,\nu)$ for balanced, nondegenerate probability measures, where $W_1$ uses geodesic distance. Its exponent is sharp even for smooth positive densities near the uniform measure. The identity $W_1(\nu_N,\sigma)=\pi/(2N)$ yields a general $O(N^{-1})$ bound for the regular grid. The sharper $O(N^{-2})$ rate follows from the sparse discrepancy spectrum and the inverse multiplier $(1-k^2)^{-1}$. Finally, a perturbation estimate gives sufficient conditions for preserving the quadratic rate and its leading constant.
\end{abstract}

\section{Introduction}
On the circle, the relation between spherical designs, equal-weight quadrature, and Minkowski reconstruction can be described explicitly. Let
\[
 u_j=(\cos\theta_j,\sin\theta_j),\qquad \theta_j=\frac{2\pi j}{N},\qquad j=0,\ldots,N-1.
\]
The regular configuration $X_N=\{u_j\}_{j=0}^{N-1}\subset \Sone$ underlies three classical constructions. First, its equal-weight quadrature rule integrates every Fourier mode of frequency less than $N$ exactly, making $X_N$ an $\Sone$-design of strength $N-1$. Second, a half-step rotation of the $2M$-point grid, followed by the substitution $x=\cos\theta$, gives the $M$-point Gauss--Chebyshev rule. Background on this angular-to-algebraic correspondence can be found in Powell~\cite{Powell1981}; related cosine grids appear in the Clenshaw--Curtis product-integration analysis of Sloan and Smith~\cite{SloanSmith1980}. Third, for $N\ge3$, assigning mass $2\pi/N$ to each direction $u_j$ prescribes the surface-area measure of a planar convex polygon. The Minkowski theorem determines this polygon up to translation, and equal side lengths together with equal angular spacing make it regular.

These constructions provide an explicit planar model for the higher-dimensional chain
\[
\begin{gathered}
 \text{spherical design}
 \longrightarrow \text{discrete surface-area measure}\\
 \longrightarrow \text{Minkowski inverse problem}
 \longrightarrow \text{convex polytope}.
\end{gathered}
\]
The basic quadrature and reconstruction statements are classical. The purpose of this note is to place them in a common error framework and derive explicit geometric estimates. The identities in~\eqref{eq:cheb-error}, Proposition~\ref{prop:mixedarea}, and~\eqref{eq:shape-error} express Gauss--Chebyshev error, mixed-area error, and support-function error through one family of circle quadrature defects.

The quantitative analysis distinguishes general inverse stability from the enhanced convergence of the regular grid. An explicit Green kernel gives a Lipschitz estimate for the inverse Minkowski map with Steiner centering (Proposition~\ref{prop:planar-stability}). A smooth family near the disk shows that the stability exponent is sharp (Proposition~\ref{prop:sharp-stability}). For the regular grid, spectral cancellation and the sparse discrepancy spectrum yield the exact $N^{-2}$ Hausdorff rate (Theorem~\ref{thm:hausdorff}). Corollary~\ref{cor:perturbation} then gives sufficient conditions on balanced perturbations for preserving this rate and its leading constant.

\section{Equal spacing and circle designs}
We identify $\Sone$ with $\{e^{i\theta}:0\le \theta<2\pi\}$ and let
\[
 \sigma=\frac{\dd\theta}{2\pi}
\]
denote normalized arc length. Throughout, $N$ is a positive integer and design strengths are nonnegative integers. The probability measure
\[
 \nu_N=\frac1N\sum_{j=0}^{N-1}\delta_{u_j}
\]
will be used for quadrature. For $N\ge3$, the corresponding surface-area measure is $\mu_N=2\pi\nu_N$.

\begin{definition}
A finite set $X=\{x_1,\dots,x_N\}\subset \Sone$ is an $\Sone$-design of strength $t$ if
\[
 \frac1N\sum_{j=1}^N p(x_j)=\int_{\Sone}p\,\dd\sigma
\]
for every restriction to $\Sone$ of a real polynomial in two variables of total degree at most $t$.
\end{definition}

This is the circle specialization of the spherical-design definition of Delsarte, Goethals, and Seidel~\cite{DGS}. On $\Sone$, these restrictions are precisely the real trigonometric polynomials of degree at most $t$. We use their complexification for Fourier calculations.

\begin{theorem}[Exact equal-weight circle rule]\label{thm:design}
For $X_N=\{e^{2\pi i j/N}:0\le j<N\}$,
\[
 \frac1N\sum_{j=0}^{N-1}q(\theta_j)
 =\frac1{2\pi}\int_0^{2\pi}q(\theta)\,\dd\theta
\]
for every trigonometric polynomial
\[
 q(\theta)=\sum_{|k|\le N-1}c_k e^{ik\theta}.
\]
Consequently, $X_N$ is an $\Sone$-design of strength $N-1$. This strength is sharp.
\end{theorem}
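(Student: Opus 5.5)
By linearity it suffices to verify the identity on each exponential $e^{ik\theta}$ with $|k|\le N-1$. The right-hand side equals $\delta_{k,0}$. On the left, put $\omega=e^{2\pi i k/N}$, so that $\frac1N\sum_{j=0}^{N-1}e^{ik\theta_j}=\frac1N\sum_{j=0}^{N-1}\omega^j$. When $k=0$ this sum is $1$. When $0<|k|<N$, the integer $k$ is not divisible by $N$, so $\omega\neq1$ while $\omega^N=1$. The geometric series formula then gives $\frac1N\cdot\frac{\omega^N-1}{\omega-1}=0$. Hence both sides agree on every mode, and so on every $q$ of the stated form.

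For the design statement, we use the remark after the definition: restrictions to $\Sone$ of real polynomials of total degree at most $t$ are exactly the real trigonometric polynomials of degree at most $t$. One direction is what we need. Since $\cos\theta=(e^{i\theta}+e^{-i\theta})/2$ and $\sin\theta=(e^{i\theta}-e^{-i\theta})/(2i)$, a monomial $x^ay^b$ with $a+b\le t$ restricts to a combination of $e^{ik\theta}$ with $|k|\le a+b\le t$. Taking $t=N-1$, every such restriction has the form of $q$ above, and the identity just proved gives the design property. Note also that $\int p\,\dd\sigma=\frac1{2\pi}\int_0^{2\pi}p(\cos\theta,\sin\theta)\dd\theta$ by the definition of $\sigma$.

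For sharpness we must exhibit a polynomial of degree $N$ that the rule fails to integrate. The natural choice is $p(x,y)=\operatorname{Re}(x+iy)^N$, a real polynomial of total degree $N$ whose restriction is $\cos N\theta$. Its integral against $\sigma$ is $0$. At the nodes, $\cos(N\theta_j)=\cos(2\pi j)=1$, so the average is $1\neq0$. Thus $X_N$ is not a design of strength $N$.

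The only point needing care is this sharpness step for small $N$. When $N=1$ the polynomial is $p=x$, of degree $1$. When $N=2$ it is $x^2-y^2$, and this must be checked to be a genuine degree-$N$ polynomial, which it is, since the top-degree term of $(x+iy)^N$ is nonzero. With that verified the argument covers all positive $N$; everything else is the routine roots-of-unity computation.
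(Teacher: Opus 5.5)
Your proof is correct and follows the paper's argument essentially verbatim: the roots-of-unity geometric sum vanishes for $0<|k|<N$, linearity gives exactness, and the restriction $\cos N\theta$ of $\operatorname{Re}(x+iy)^N$ witnesses sharpness. Your extra check that $\operatorname{Re}(x+iy)^N$ has degree exactly $N$ is harmless but unnecessary, since failure on any polynomial of degree at most $N$ already rules out strength $N$.
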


\begin{proof}
For an integer $k$,
\[
 \frac1N\sum_{j=0}^{N-1}e^{ik\theta_j}
 =\frac1N\sum_{j=0}^{N-1}e^{2\pi i kj/N}.
\]
The geometric sum equals $1$ when $N\mid k$ and $0$ otherwise. Hence it agrees with the circle integral for every $|k|\le N-1$. Linearity gives the quadrature formula. For sharpness, take the real function $q(\theta)=\cos(N\theta)$, the restriction of $\operatorname{Re}(x+iy)^N$. Its circle integral is zero while its discrete average is one, so the grid is not a design of strength $N$.
\end{proof}

\begin{remark}
This is the finite exactness property of the periodic trapezoidal rule; see Trefethen and Weideman~\cite{Trefethen} for its convergence theory. It is also an equal-weight, or Chebyshev-type, quadrature rule for the low-frequency trigonometric space.
\end{remark}

\section{From the circle rule to Gauss--Chebyshev quadrature}
The classical Chebyshev polynomial satisfies
\[
 T_k(\cos\theta)=\cos(k\theta).
\]
Thus cosine projection converts trigonometric exactness on the circle into algebraic exactness on $[-1,1]$. See Gautschi~\cite{Gautschi} for the Gaussian-quadrature framework.

\begin{theorem}[Projection to the Gauss--Chebyshev formula]\label{thm:cheb}
Let $M\ge1$ and define
\[
 \alpha_j=\frac{(2j-1)\pi}{2M},\qquad
 x_j=\cos\alpha_j,\qquad j=1,\ldots,M.
\]
Then, for every algebraic polynomial $p$ of degree at most $2M-1$,
\[
 \frac1\pi\int_{-1}^1\frac{p(x)}{\sqrt{1-x^2}}\,\dd x
 =\frac1M\sum_{j=1}^M p(x_j).
\]
Thus the $M$-point Gauss--Chebyshev rule is the even projection, under $x=\cos\theta$, of a half-step rotated $2M$-point equal-weight rule on $\Sone$.
\end{theorem}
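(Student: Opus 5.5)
The plan is to reduce to Theorem~\ref{thm:design} with $N=2M$, applied to a rotated grid. By linearity and the identity $T_k(\cos\theta)=\cos(k\theta)$, every algebraic polynomial $p$ of degree at most $2M-1$ is a combination of $T_0,\dots,T_{2M-1}$. It therefore suffices to verify the formula for $p=T_k$ with $0\le k\le 2M-1$.

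For the left-hand side, substitute $x=\cos\theta$ with $\theta\in(0,\pi)$. Then $dx/\sqrt{1-x^2}=-d\theta$, so
\[
 \frac1\pi\int_{-1}^1\frac{T_k(x)}{\sqrt{1-x^2}}\,\dd x=\frac1\pi\int_0^\pi\cos(k\theta)\,\dd\theta=\frac1{2\pi}\int_0^{2\pi}\cos(k\theta)\,\dd\theta ,
\]
using that $\cos(k\theta)$ is even about $\theta=\pi$. This integral equals $\delta_{k0}$.

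For the right-hand side, consider the rotated $2M$-point grid $\phi_m=\frac{\pi}{2M}+\frac{2\pi m}{2M}$, $m=0,\dots,2M-1$. These angles are exactly $\pm\alpha_j$ modulo $2\pi$ for $j=1,\dots,M$. Indeed, $\phi_{j-1}=\alpha_j$ for $j\le M$, and $\phi_{2M-j}=2\pi-\alpha_j$. Since cosine is even, it follows that
\[
 \frac1M\sum_{j=1}^M\cos(k\alpha_j)=\frac1{2M}\sum_{m=0}^{2M-1}\cos(k\phi_m).
\]
Rotation does not affect the relevant geometric-sum argument. We have $\frac1{2M}\sum_m e^{ik\phi_m}=e^{ik\pi/(2M)}\cdot\frac1{2M}\sum_m e^{2\pi ikm/(2M)}$, and by the proof of Theorem~\ref{thm:design} this vanishes for $0<|k|\le 2M-1$. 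Taking real parts shows that the discrete average equals $\delta_{k0}$. This matches the integral, and linearity completes the proof.

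The only point needing care is the bookkeeping of the index correspondence $\phi_m\leftrightarrow\pm\alpha_j$. In particular, one must check that no node is duplicated or missed: no $\phi_m$ equals $0$ or $\pi$, so the nodes pair off cleanly into $M$ symmetric pairs. The remaining steps are routine. The closing ``even projection'' sentence of the theorem is just this pairing, read as a description of the construction.
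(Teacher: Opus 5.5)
Your proposal is correct and follows essentially the paper's route: the half-step rotated $2M$-point grid, exactness through the geometric-sum argument of Theorem~\ref{thm:design}, the substitution $x=\cos\theta$, and the pairing $\alpha_j\leftrightarrow 2\pi-\alpha_j$ of the nodes. The only difference is cosmetic. You check the identity mode by mode on $T_0,\dots,T_{2M-1}$, which span this space because $\deg T_k=k$. The paper instead applies the rotated circle rule at once to the even trigonometric polynomial $p(\cos\theta)$.
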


\begin{proof}
Consider the $2M$ equally spaced angles
\[
 \beta_j=\frac{(2j-1)\pi}{2M},\qquad j=1,\ldots,2M.
\]
They differ from the standard $2M$-point grid only by a rotation, so Theorem~\ref{thm:design} gives exactness for all trigonometric polynomials of degree at most $2M-1$. For $p$ of algebraic degree at most $2M-1$, the function $p(\cos\theta)$ is an even trigonometric polynomial of degree at most $2M-1$. Hence
\[
 \frac1{2\pi}\int_0^{2\pi}p(\cos\theta)\,\dd\theta
 =\frac1{2M}\sum_{j=1}^{2M}p(\cos\beta_j).
\]
The left side equals
\[
 \frac1\pi\int_0^\pi p(\cos\theta)\,\dd\theta
 =\frac1\pi\int_{-1}^1\frac{p(x)}{\sqrt{1-x^2}}\,\dd x.
\]
The $2M$ cosine values occur in pairs, so the right side reduces to $M^{-1}\sum_{j=1}^M p(x_j)$.
\end{proof}

Cosine projection pairs the $2M$ circle nodes into $M$ distinct algebraic nodes. Here ``Gauss--Chebyshev'' denotes the Gaussian rule for the weight $(1-x^2)^{-1/2}$. The term ``Chebyshev-type quadrature'' refers more generally to equal-weight formulas; see Korevaar and Meyers~\cite{KorevaarMeyers}.

\subsection{One family of error functionals}\label{subsec:errorfunctional}
For a continuous $2\pi$-periodic function $f$ and a rotation angle $\alpha$, define
\[
 Q_N^\alpha f=\frac1N\sum_{j=0}^{N-1}f(\theta_j+\alpha),
 \qquad If=\int_{\Sone}f\,\dd\sigma,
 \qquad \mathcal E_N^\alpha(f)=Q_N^\alpha f-If.
\]
Write $Q_N=Q_N^0$ and $\mathcal E_N=\mathcal E_N^0$. Rotation gives
\[
 \mathcal E_N^\alpha(f)=\mathcal E_N(f(\,\cdot+\alpha)),
 \qquad
 \mathcal E_N^\alpha(e^{ik\theta})=0\quad(0<|k|<N).
\]
For the Gauss--Chebyshev nodes of Theorem~\ref{thm:cheb}, every continuous $p$ on $[-1,1]$ satisfies the exact error identity
\begin{equation}\label{eq:cheb-error}
 \frac1M\sum_{j=1}^M p(x_j)
 -\frac1\pi\int_{-1}^1\frac{p(x)}{\sqrt{1-x^2}}\,\dd x
 =\mathcal E_{2M}^{\pi/(2M)}(p\circ\cos).
\end{equation}
The half-step rotation is essential for the Gauss--Chebyshev nodes. Testing $\mathcal E_N$ against support functions gives mixed-area error (Subsection~\ref{sec:mixedarea}); testing it against translates of the inverse kernel gives support-function error (Subsection~\ref{subsec:green}). These identities relate integration and geometric errors through the same family of quadrature defects.

\begin{figure}[tbp]
\centering
\begin{tikzpicture}[x=1.4cm,y=1.4cm,>=Stealth,font=\small]
 \begin{scope}
  \node[align=center] at (0,1.65) {Rotated circle grid\\$N=8$};
  \draw[thick] (0,0) circle (1);
  \foreach \j in {0,...,7}{
   \pgfmathsetmacro{\ang}{22.5+45*\j}
   \draw[->,gray!65] (0,0)--(\ang:0.82);
   \fill[blue!75!black] (\ang:1) circle (1.5pt);
  }
  \node[align=center,font=\footnotesize] at (0,-1.5)
   {$\alpha=\pi/8$\\$\theta_j=\alpha+2\pi j/8$};
 \end{scope}
 \begin{scope}[xshift=5.18cm]
  \node[align=center] at (0,1.65) {Minkowski polygon\\$P_8^\alpha$};
  \draw[dashed,gray!65] (0,0) circle (1);
  \draw[thick] (45:1.026172153) \foreach \j in {2,...,8}
   {--({45*\j}:1.026172153)} --cycle;
  \foreach \j in {0,...,7}{
   \pgfmathsetmacro{\ang}{22.5+45*\j}
   \draw[->,blue!75!black] (\ang:0.948059449)--(\ang:1.27);
  }
  \node[align=center,font=\footnotesize] at (0,-1.5)
   {Blue arrows: outward normals\\Each side has length $2\pi/8$};
 \end{scope}
 \begin{scope}[xshift=10.36cm]
  \node[align=center] at (0,1.65) {Gauss--Chebyshev nodes\\$M=4=N/2$};
  \draw[->] (-1.12,0)--(1.14,0);
  \foreach \x in {-1,0,1}{
   \draw (\x,-0.04)--(\x,0.04);
   \node[below,font=\scriptsize] at (\x,-0.055) {$\x$};
  }
  \foreach \j in {1,...,4}{
   \pgfmathsetmacro{\xx}{cos((2*\j-1)*180/8)}
   \fill[blue!75!black] (\xx,0) circle (1.7pt);
  }
  \node[align=center,font=\footnotesize] at (0,-1.5)
   {$x_j=\cos((2j-1)\pi/8)$\\Eight angles project two-to-one};
 \end{scope}
\end{tikzpicture}
\caption{The half-step rotated eight-point grid, its Minkowski polygon, and the four Gauss--Chebyshev nodes obtained by cosine projection. The blue arrows on the polygon indicate its prescribed outward normal directions. The dashed circle is the unit circle.}
\label{fig:chain}
\end{figure}

\section{The planar Minkowski inverse problem}
A convex body is a compact convex set with nonempty interior. Its support function is $h_K(u)=\max_{x\in K}x\cdot u$; we also write $h_K(\theta)=h_K(\cos\theta,\sin\theta)$. For a planar convex body $K$, the surface-area measure $S_K$ records boundary length according to the outward unit normal direction. For a convex polygon with side lengths $\ell_j$ and outward normals $v_j$, one has
\[
 S_K=\sum_j \ell_j\,\delta_{v_j}.
\]
The planar Minkowski theorem says that a finite positive Borel measure $\mu$ on $\Sone$, not concentrated on an antipodal pair and satisfying
\[
 \int_{\Sone}u\,\dd\mu(u)=0,
\]
is the surface-area measure of a convex body, unique up to translation; see, for example, Schneider~\cite{Schneider}.

For the equally spaced directions, define the unnormalized measure
\[
 \mu_N=\frac{2\pi}{N}\sum_{j=0}^{N-1}\delta_{u_j}.
\]
Its total mass is $2\pi$, the perimeter of the unit disk. Reconstruction as a convex body requires $N\ge3$. For $N=1$, balance fails; for $N=2$, the measure is supported on an antipodal pair and cannot be the surface-area measure of a body with nonempty interior.

\begin{proposition}[Minkowski reconstruction of the regular polygon]\label{prop:polygon}
For $N\ge3$, the measure $\mu_N$ satisfies the Minkowski balance condition. The convex body $P_N$ with surface-area measure $S_{P_N}=\mu_N$ is, up to translation, the regular $N$-gon with side length
\[
 s_N=\frac{2\pi}{N}.
\]
We henceforth choose the representative whose center is the origin. Its circumradius and inradius are
\[
 R_N=\frac{\pi/N}{\sin(\pi/N)},
 \qquad
 r_N=\frac{\pi}{N}\cot\frac{\pi}{N}.
\]
\end{proposition}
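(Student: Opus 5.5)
The plan has four steps: check the balance condition, build the regular polygon explicitly, identify it via Minkowski uniqueness, and read off the radii from elementary trigonometry.

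\textbf{Balance.} The condition $\int u\,\dd\mu_N=0$ reads $\frac{2\pi}{N}\sum_j e^{2\pi i j/N}=0$. This is the case $k=1$ of the geometric-sum computation in the proof of Theorem~\ref{thm:design}, and it applies since $N\ge2$. For $N\ge3$ the support $\{u_j\}$ consists of at least three distinct directions, so $\mu_N$ is not concentrated on an antipodal pair. Hence the planar Minkowski theorem gives a convex body $P_N$ with $S_{P_N}=\mu_N$, unique up to translation.

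\textbf{Explicit polygon.} Let $Q$ be the polygon cut out by the half-planes $\{x: x\cdot u_j\le r\}$ for $j=0,\dots,N-1$, with $r>0$ to be fixed. The rotation by $2\pi/N$ permutes these half-planes, so $Q$ is invariant under the cyclic rotation group. Each line $x\cdot u_j=r$ meets its neighbours at the points
\[
 \frac{r}{\cos(\pi/N)}\,e^{i(\theta_j\pm\pi/N)}.
\]
The two facet normals at such a vertex differ by $2\pi/N<\pi$, so every facet is nondegenerate. Therefore $Q$ is a regular $N$-gon with outward normals exactly $u_j$ and common side length $2r\tan(\pi/N)$. We choose $r=r_N=\frac{\pi}{N}\cot\frac{\pi}{N}$, so the side length is $2\pi/N$. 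Then $S_Q=\sum_j\frac{2\pi}{N}\delta_{u_j}=\mu_N$, using the stated formula for the surface-area measure of a polygon.

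\textbf{Identification and radii.} By uniqueness up to translation, $P_N$ is a translate of $Q$, and we take the centered representative $P_N=Q$. The inradius is the common distance $r_N$ from the center to the sides. The circumradius is the vertex distance
\[
 R_N=\frac{r_N}{\cos(\pi/N)}=\frac{\pi/N}{\sin(\pi/N)}.
\]
Equivalently, $R_N$ follows from the side-length relation $s_N=2R_N\sin(\pi/N)$.

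The only step needing care is confirming that all $N$ half-planes contribute genuine edges, i.e.\ that no constraint is redundant. This holds because adjacent normals differ by an angle less than $\pi$ when $N\ge3$.
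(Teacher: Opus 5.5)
Your proof is correct, but it identifies $P_N$ by a different route from the paper.

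The paper starts from the polygon supplied by the Minkowski existence theorem. Its edges all have length $2\pi/N$ and its normals turn by the constant angle $2\pi/N$, so it is equilateral and equiangular, hence regular. The radii then follow from $s_N=2R_N\sin(\pi/N)$ and $r_N=R_N\cos(\pi/N)$.

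You instead construct the candidate $Q=\bigcap_j\{x\cdot u_j\le r_N\}$ explicitly, compute its vertices and side length, verify $S_Q=\mu_N$, and use only the uniqueness half of Minkowski's theorem. This gives you the origin-centered representative and both radii directly from the construction, since the inradius is the offset you chose. The cost is some half-plane bookkeeping that the paper's structural argument avoids.

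That bookkeeping is the one loose spot. Consecutive normals being less than $\pi$ apart guarantees that adjacent lines intersect. On its own, though, it does not make every constraint active: with unequal offsets a constraint can be redundant even when all gaps are $2\pi/N$. With equal offsets the check is immediate. For instance, $r_Nu_j$ satisfies the $j$th constraint with equality and every other one strictly, because $\cos(\theta_j-\theta_k)<1$ for $k\ne j$. Alternatively, the point $\frac{r_N}{\cos(\pi/N)}e^{i(\theta_j+\pi/N)}$ has inner product $\frac{r_N}{\cos(\pi/N)}\cos\bigl((2(j-k)+1)\pi/N\bigr)\le r_N$ with each $u_k$.
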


\begin{proof}
The balance condition is
\[
 \int_{\Sone}u\,\dd\mu_N(u)
 =\frac{2\pi}{N}\sum_{j=0}^{N-1}u_j=0,
\]
which follows from degree-one design exactness. By the discrete Minkowski theorem there is a polygon whose outward normals are the $u_j$ and whose side lengths are all $2\pi/N$, unique up to translation. In cyclic order the normal directions turn by the constant angle $2\pi/N$. Hence the edges have equal lengths and equal turning angles, so the polygon is regular. The radius formulas are the elementary relations
\[
 s_N=2R_N\sin\frac\pi N,
 \qquad
 r_N=R_N\cos\frac\pi N.
\]
\end{proof}

Degree-one exactness gives balance, and the directions span the plane for $N\ge3$. These two conditions ensure existence; the grid symmetry determines the regular polygon explicitly.

\subsection{Quadrature error as mixed-area error}\label{sec:mixedarea}
For planar convex bodies $K,L$, write $V(K,L)$ for their mixed area, normalized by
\[
 |K+sL|=|K|+2sV(K,L)+s^2|L|,\qquad s\ge0.
\]
Here $|\cdot|$ denotes area and $+$ denotes Minkowski addition. The first-variation formula is
\[
 2V(K,L)=\int_{\Sone}h_L(u)\,\dd S_K(u).
\]
Since $S_{P_N}=2\pi\nu_N$ and $S_{B^2}=2\pi\sigma$, the quadrature defect from Subsection~\ref{subsec:errorfunctional} has an exact mixed-area interpretation.

\begin{proposition}[Exact mixed-area defect]\label{prop:mixedarea}
Let $N\ge3$. For every planar convex body $K$,
\[
 \boxed{\;V(P_N,K)-V(B^2,K)=\pi\,\mathcal E_N(h_K).\;}
\]
Consequently, if $h_K(\theta)$ is a trigonometric polynomial of degree at most $N-1$, then
\[
 V(P_N,K)=V(B^2,K).
\]
\end{proposition}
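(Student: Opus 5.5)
The plan is a direct computation from the first-variation formula $2V(K,L)=\int_{\Sone}h_L\,\dd S_K$, applied with $K=P_N$ or $K=B^2$ in the first slot and the given body (call it $L$ to avoid a clash of names) in the second. By Proposition~\ref{prop:polygon}, $S_{P_N}=\mu_N=2\pi\nu_N$. Since $B^2$ has unit curvature, its surface-area measure is arc length, $S_{B^2}=\dd\theta=2\pi\sigma$. Substituting gives
\[
 2V(P_N,L)=\frac{2\pi}{N}\sum_{j=0}^{N-1}h_L(\theta_j)=2\pi\,Q_N h_L,
 \qquad
 2V(B^2,L)=\int_0^{2\pi}h_L(\theta)\,\dd\theta=2\pi\,I h_L.
\]
Subtracting and dividing by $2$ gives $V(P_N,L)-V(B^2,L)=\pi\,(Q_Nh_L-Ih_L)=\pi\,\mathcal E_N(h_L)$, which is the boxed identity.

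Two points need checking. First, the functional $\mathcal E_N$ was defined for continuous $2\pi$-periodic functions. This applies here because a support function is convex, hence continuous, on $\mathbb R^2$, so $\theta\mapsto h_L(\theta)$ is continuous and periodic. Second, the identity must not depend on the choice of representative for $P_N$. The proposition fixes the centered representative, but $V(\cdot,L)$ is translation invariant in each argument, so this is harmless anyway.

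For the consequence, suppose $h_L(\theta)$ is a trigonometric polynomial of degree at most $N-1$. Theorem~\ref{thm:design} then gives $Q_Nh_L=Ih_L$, so $\mathcal E_N(h_L)=0$ and $V(P_N,L)=V(B^2,L)$.

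The only step I regard as a genuine obstacle is justifying the first-variation formula with a general body in the first slot and the mixed-area normalization fixed in the paper. The paper states this formula as given, so I will cite it together with the standard facts $S_{B^2}=\dd\theta$ and $S_{P}=\sum_j\ell_j\delta_{v_j}$ for polygons. As a consistency check on the normalization, take $L=B^2$ with $h_L\equiv1$. Both sides then give $V(P_N,B^2)-V(B^2,B^2)=\pi-\pi=0$, matching $\mathcal E_N(1)=0$. Indeed $2V(P_N,B^2)$ equals the perimeter $2\pi$, which is consistent with Steiner's formula.
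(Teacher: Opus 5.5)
Your proposal is correct and matches the paper's proof: apply $2V(K,L)=\int h_L\,\dd S_K$ with $S_{P_N}=2\pi\nu_N$ and $S_{B^2}=2\pi\sigma$ to get $V(P_N,K)=\pi Q_Nh_K$ and $V(B^2,K)=\pi Ih_K$, subtract, and deduce the exactness statement from Theorem~\ref{thm:design}. Your remarks on continuity of $h_K$ and translation invariance are sound additions, though the paper does not need them.
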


\begin{proof}
By the mixed-area identity and the definitions of $S_{P_N}$ and $S_{B^2}$,
\[
 V(P_N,K)=\frac12\int h_K\,\dd S_{P_N}
 =\pi Q_Nh_K,
\]
whereas
\[
 V(B^2,K)=\frac12\int h_K\,\dd S_{B^2}
 =\pi Ih_K.
\]
Subtracting gives the formula. The exactness statement follows from Theorem~\ref{thm:design}.
\end{proof}

Proposition~\ref{prop:mixedarea} identifies the quadrature error for a support function with a mixed-area error. Together with~\eqref{eq:cheb-error}, it connects the projected integration rule to a geometric observable through the same error-functional family.

\section{Exact Hausdorff convergence to the disk}
Let $B^2$ denote the unit disk and retain the origin-centered polygon $P_N$ from Proposition~\ref{prop:polygon}. Both bodies have perimeter $2\pi$. This centering fixes the translation ambiguity in Minkowski reconstruction, so the Hausdorff comparison uses uniquely normalized representatives.

Recall that for compact convex sets,
\[
 d_H(K,L)=\|h_K-h_L\|_{L^\infty(\Sone)},
\]
where $h_K$ denotes the support function. For a centered regular polygon the support function takes its minimum $r_N$ in the directions normal to the sides and its maximum $R_N$ in the directions of the vertices.

\begin{theorem}[Exact Hausdorff error]\label{thm:hausdorff}
For every $N\ge3$ and the origin-centered polygon $P_N$,
\[
 \boxed{
 d_H(P_N,B^2)
 =1-\frac{\pi}{N}\cot\frac{\pi}{N}.}
\]
Consequently,
\[
 d_H(P_N,B^2)
 =\frac{\pi^2}{3N^2}+\frac{\pi^4}{45N^4}+O(N^{-6}).
\]
Since $X_N$ has design strength $t=N-1$,
\[
 d_H(P_N,B^2)=\frac{\pi^2}{3(t+1)^2}+O(t^{-4}).
\]
\end{theorem}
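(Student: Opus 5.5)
The plan is to compute the support-function difference directly and read off the sup norm. Using $d_H(K,L)=\|h_K-h_L\|_{L^\infty(\Sone)}$ and $h_{B^2}\equiv1$, we have
\[
 d_H(P_N,B^2)=\max_\theta |h_{P_N}(\theta)-1|.
\]
The origin-centered regular polygon has support function ranging over $[r_N,R_N]$. The minimum $r_N$ is attained at the side normals $\theta_j$. The maximum $R_N$ is attained at the vertex directions $\theta_j+\pi/N$. Explicitly, for $|\theta-\theta_j|\le\pi/N$ one has $h_{P_N}(\theta)=r_N/\cos(\theta-\theta_j)$, since the supporting vertex there is the one adjacent to side $j$.

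Hence the distance is $\max(1-r_N,\,R_N-1)$, provided $r_N\le1\le R_N$. Writing $x=\pi/N\in(0,\pi/3]$, this inequality reads $x\cot x\le1\le x/\sin x$, which follows from $\tan x\ge x$ and $\sin x\le x$.

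The key step, which I expect to be the only real obstacle, is comparing the two candidates. We must show
\[
 R_N-1\le 1-r_N,\quad\text{i.e.}\quad \frac{x}{\sin x}+x\cot x\le 2 .
\]
The left side equals
\[
 \frac{x(1+\cos x)}{\sin x}=x\cot\frac{x}{2}=2\cdot\frac{x/2}{\tan(x/2)},
\]
and this is at most $2$ because $\tan y\ge y$ for $y\in(0,\pi/2)$. So the maximum deviation is $1-r_N=1-\frac{\pi}{N}\cot\frac{\pi}{N}$, which gives the boxed identity. Geometrically, the polygon lies mostly outside the disk's inner deficit. Equal perimeter forces the inward deviation at the side midpoints to dominate the outward deviation at the vertices.

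For the expansion, insert the Laurent series
\[
 x\cot x=1-\frac{x^2}{3}-\frac{x^4}{45}-\frac{2x^6}{945}-\cdots
\]
with $x=\pi/N$. This yields
\[
 d_H(P_N,B^2)=\frac{\pi^2}{3N^2}+\frac{\pi^4}{45N^4}+O(N^{-6}).
\]
Finally, substituting $N=t+1$ from Theorem~\ref{thm:design} gives
\[
 d_H(P_N,B^2)=\frac{\pi^2}{3(t+1)^2}+O(t^{-4}),
\]
since $(t+1)^{-4}=O(t^{-4})$.
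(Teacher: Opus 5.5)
Your proof is correct and follows the paper's argument. You reduce to $\max\{R_N-1,\,1-r_N\}$, show $x\cot(x/2)\le 2$ via $\tan y\ge y$, and then expand $x\cot x$; your explicit check $r_N\le 1\le R_N$ is a harmless addition that the paper leaves implicit.

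One inessential slip: $r_N/\cos(\theta-\theta_j)$ is the radial function of $P_N$, not its support function. The support function is $R_N\cos(\theta-\theta_j-\pi/N)$ on the normal cone $[\theta_j,\theta_j+2\pi/N]$ of the vertex at angle $\theta_j+\pi/N$. Both functions have range $[r_N,R_N]$ with the same extremal directions, so nothing in your argument changes.
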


\begin{proof}
Since $h_{B^2}\equiv1$,
\[
 d_H(P_N,B^2)=\max\{R_N-1,\,1-r_N\}.
\]
Put $x=\pi/N\in(0,\pi/3]$. To identify the larger term, observe that $1-r_N\ge R_N-1$ is equivalent to
\[
 2\ge x(\cot x+\csc x)=x\cot(x/2).
\]
Writing $y=x/2$, the elementary inequality $\tan y\ge y$ for $0\le y<\pi/2$ gives $y\cot y\le1$, hence $x\cot(x/2)\le2$. Therefore
\[
 d_H(P_N,B^2)=1-x\cot x.
\]
The Taylor expansion
\[
 x\cot x=1-\frac{x^2}{3}-\frac{x^4}{45}+O(x^6)
\]
gives the stated asymptotics.
\end{proof}

\begin{figure}[tbp]
\centering
\begin{tikzpicture}
\begin{axis}[width=0.73\textwidth,height=0.43\textwidth,
 xmode=log,ymode=log,xmin=3,xmax=64,
 xlabel={$N$},ylabel={Hausdorff error},
 xtick={4,8,16,32,64},xticklabels={4,8,16,32,64},
 grid=both,minor grid style={gray!12},major grid style={gray!25},
 legend style={font=\small,at={(0.98,0.98)},anchor=north east},
 tick label style={font=\small},label style={font=\small}]
\addplot[blue!75!black,thick] table[row sep=\\] {
N error\\
3 0.395400211921927\\
4 0.214601836602552\\
5 0.13519373402279\\
6 0.0931003178828911\\
7 0.0680593765009042\\
8 0.0519405510314801\\
9 0.0409494581263906\\
10 0.0331172009535974\\
11 0.0273379990800933\\
12 0.0229513833431467\\
13 0.0195428896520884\\
14 0.0168416606170539\\
15 0.0146645740136762\\
16 0.0128841990272246\\
17 0.011409628736115\\
18 0.0101745943917584\\
19 0.00912986160659801\\
20 0.00823823124527268\\
21 0.00747117259566432\\
22 0.00680650670917138\\
23 0.00622678143184885\\
24 0.00571811170784198\\
25 0.0052693388561269\\
26 0.00487141232611321\\
27 0.00451692908672963\\
28 0.00419978632246265\\
29 0.00391491665846244\\
30 0.00365808423461556\\
31 0.00342572615282777\\
32 0.00321482811383034\\
33 0.00302282606864746\\
34 0.00284752784526765\\
35 0.00268705024410043\\
36 0.00253976820827406\\
37 0.00240427349028582\\
38 0.00227934083990255\\
39 0.00216390018868751\\
40 0.0020570136456427\\
41 0.0019578563757956\\
42 0.00186570063026847\\
43 0.0017799023478362\\
44 0.00169988986536473\\
45 0.00162515436609689\\
46 0.00155524176661692\\
47 0.00148974580004391\\
48 0.00142830209802558\\
49 0.0013705831100248\\
50 0.00131629372718478\\
51 0.00126516750127303\\
52 0.00121696336797916\\
53 0.00117146279911462\\
54 0.00112846732072902\\
55 0.0010877963443745\\
56 0.00104928526716153\\
57 0.00101278380319281\\
58 0.000978154514722718\\
59 0.000945271516177626\\
60 0.00091401932817281\\
61 0.00088429186200456\\
62 0.000855991517914045\\
63 0.000829028382783425\\
64 0.000803319514927736\\
};
\addlegendentry{$1-(\pi/N)\cot(\pi/N)$}
\addplot[orange!85!black,dashed,thick,domain=3:64,samples=80] {pi^2/(3*x^2)};
\addlegendentry{$\pi^2/(3N^2)$}
\end{axis}
\end{tikzpicture}
\caption{The exact Hausdorff error from Theorem~\ref{thm:hausdorff} and its quadratic leading term for the centered polygon with perimeter $2\pi$. The solid curve evaluates the exact formula; the dashed curve shows its leading term. Both have asymptotic slope $-2$ on logarithmic axes.}
\label{fig:error}
\end{figure}

\subsection{Why the rate is quadratic: a Fourier multiplier identity}\label{subsec:fourierMinkowski}
For a periodic function $f$ and a finite signed measure $\eta$, we use the conventions
\[
 \widehat f(k)=\frac1{2\pi}\int_0^{2\pi}f(\theta)e^{-ik\theta}\,\dd\theta,
 \qquad \widehat\eta(k)=\int_{\Sone}e^{-ik\theta}\,\dd\eta(\theta).
\]
Thus a density $f\,d\theta$ has measure coefficient $2\pi\widehat f(k)$. Under the angular parametrization, the planar Minkowski equation is
\begin{equation}\label{eq:minkowski-distribution}
 h_K''+h_K=S_K
\end{equation}
in periodic distributions: for every smooth periodic $\psi$,
\[
 \int_0^{2\pi}h_K(\theta)(\psi''(\theta)+\psi(\theta))\,\dd\theta
 =\int_{\Sone}\psi\,\dd S_K.
\]
Translation adds $a\cos\theta+b\sin\theta$ to $h_K$. The condition $\widehat h_K(\pm1)=0$ fixes translation; it is equivalent to placing the Steiner point
\[
 \st(K)=\frac1\pi\int_0^{2\pi}h_K(\theta)(\cos\theta,\sin\theta)\,\dd\theta
\]
at the origin. For a regular polygon this is its usual center.

Since $S_{P_N}=2\pi\nu_N$ and $S_{B^2}=2\pi\sigma$, the difference $g_N=h_{P_N}-1$ solves $g_N''+g_N=2\pi(\nu_N-\sigma)$. The Fourier coefficients of the measure discrepancy are
\[
 \widehat{\nu_N-\sigma}(k)=
 \begin{cases}
  1,&k=mN,\quad m\in\mathbb Z\setminus\{0\},\\
  0,&\text{otherwise}.
 \end{cases}
\]
For $k\ne\pm1$, the Fourier equation is
$(1-k^2)\widehat g_N(k)=\widehat{\nu_N-\sigma}(k)$; the first harmonics vanish by centering. Therefore
\begin{equation}\label{eq:fourier-support}
 \boxed{\quad h_{P_N}(\theta)-1
 =-2\sum_{m=1}^{\infty}\frac{\cos(mN\theta)}{m^2N^2-1}.\quad}
\end{equation}
The coefficients are absolutely summable, so this series converges uniformly and represents the continuous support-function difference. For $N\ge3$,
\begin{equation}\label{eq:tail-bound}
 2\sum_{m\ge1}\frac1{m^2N^2-1}
 \le\frac{2}{N^2-1}\sum_{m\ge1}\frac1{m^2}
 =\frac{\pi^2}{3(N^2-1)}.
\end{equation}
This estimate bounds the entire series uniformly. At $\theta=0$, all cosines equal one and $h_{P_N}(0)=r_N$, so the bound obtained by summing the absolute coefficients is attained. Hence
\begin{equation}\label{eq:sum-identity}
 \|h_{P_N}-1\|_\infty
 =2\sum_{m\ge1}\frac1{m^2N^2-1}
 =1-\frac\pi N\cot\frac\pi N.
\end{equation}
Thus the support-function formula also gives an elementary derivation of the series sum. Three properties explain the quadratic rate: cancellation of the low frequencies, concentration of the remaining frequencies on $N\mathbb Z$, and decay of the inverse multiplier like $k^{-2}$. The uniform series estimate makes this mechanism quantitative.

\section{Quantitative stability, discrepancy, and perturbations}\label{sec:stability}
To compare reconstruction errors for general normal measures with those of the regular grid, we use geodesic Wasserstein distance and fix Steiner centering.

For $u=e^{i\theta}$ and $v=e^{i\phi}$, let
\[
 d_g(u,v)=\min_{\ell\in\mathbb Z}|\theta-\phi+2\pi\ell|.
\]
For probability measures on $\Sone$, define
\begin{equation}\label{eq:w1-definition}
 \Wone(\mu,\nu)=\inf_{\gamma\in\Pi(\mu,\nu)}\int d_g(u,v)\,\dd\gamma(u,v)
 =\sup_{\Lip_{d_g}(f)\le1}\left|\int f\,\dd(\mu-\nu)\right|.
\end{equation}
Here $\Pi(\mu,\nu)$ is the set of couplings with marginals $\mu$ and $\nu$. The second equality is Kantorovich--Rubinstein duality, with the supremum taken over real functions. Since $\mu-\nu$ has total mass zero, additive constants do not affect the integral. Thus $\Wone$ is the dual Lipschitz norm of the measure difference.

A probability measure $\mu$ is called balanced if $\int u\,d\mu(u)=0$ and nondegenerate if its support is not contained in an antipodal pair. For each such measure, let $K_\mu$ denote the unique Steiner-centered body satisfying
\[
 S_{K_\mu}=2\pi\mu,\qquad \widehat h_{K_\mu}(\pm1)=0.
\]
These bodies have perimeter $2\pi$. No quantitative separation from antipodal concentration is assumed in the planar estimate below.

\subsection{An explicit inverse kernel}\label{subsec:green}
The kernel of $D^2+1$ consists of the first harmonics. To obtain the Steiner-centered inverse, we choose a Green kernel whose first Fourier coefficients vanish. Extend the following even function periodically from $[-\pi,\pi]$:
\begin{equation}\label{eq:green}
 G(t)=-\frac{\cos t}{4\pi}
 +\left(\frac12-\frac{|t|}{2\pi}\right)\sin|t|.
\end{equation}
Direct differentiation away from $0$ gives $G''+G=-\pi^{-1}\cos t$. Moreover, $G'(0+)=1/2$ and $G'(0-)=-1/2$, while the values and derivatives match at $\pm\pi$. Consequently, in periodic distributions,
\begin{equation}\label{eq:green-distribution}
 (D^2+1)G=\delta_0-\frac1\pi\cos t.
\end{equation}
Direct integration gives $\widehat G(\pm1)=0$ in the function convention of Subsection~\ref{subsec:fourierMinkowski}; the other coefficients are
\begin{equation}\label{eq:green-fourier}
 \widehat G(k)=\frac1{2\pi(1-k^2)},\qquad k\ne\pm1.
\end{equation}
To verify the vanishing first harmonics, note that $\int_0^\pi(1/2-t/(2\pi))\sin t\cos t\,dt=1/8$, which cancels the contribution of $-\cos t/(4\pi)$.

For $0<t<\pi$,
\[
 G'(t)=-\frac{\sin t}{4\pi}
 +\left(\frac12-\frac{t}{2\pi}\right)\cos t.
\]
On $[0,\pi/2]$, $G'$ lies between $-1/(4\pi)$ and $1/2$; on $[\pi/2,\pi]$, its absolute value is at most $1/4+1/(4\pi)<1/2$. Together with evenness, periodicity, and the one-sided derivative at zero, this proves
\begin{equation}\label{eq:green-lip}
 \Lip_{d_g}(G)=\frac12.
\end{equation}

\begin{proposition}[Planar Lipschitz stability]\label{prop:planar-stability}
For any two balanced, nondegenerate probability measures $\mu,\nu$ as above,
\begin{equation}\label{eq:stability}
 \boxed{\quad d_H(K_\mu,K_\nu)\le\pi\Wone(\mu,\nu).\quad}
\end{equation}
With the stated perimeter and Steiner normalizations, the constant is independent of the number and sizes of atoms and of the concentration of the measures.
\end{proposition}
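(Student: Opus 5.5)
The plan is to write the difference of support functions as a convolution of the Green kernel $G$ with the measure difference. Then the Lipschitz bound \eqref{eq:green-lip} together with Kantorovich--Rubinstein duality \eqref{eq:w1-definition} gives the estimate directly.

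\textbf{Step 1: representation formula.} For a balanced, nondegenerate probability measure $\mu$, I claim
\[
 h_{K_\mu}(\theta)=1+2\pi\int_{\Sone}G(\theta-\phi)\,\dd(\mu-\sigma)(\phi).
\]
I will verify this on the Fourier side. Write $g=h_{K_\mu}-1$. By \eqref{eq:minkowski-distribution} and $S_{B^2}=2\pi\sigma$, $g$ solves $g''+g=2\pi(\mu-\sigma)$. Hence for $k\ne\pm1$ we have $(1-k^2)\widehat g(k)=\widehat{\mu-\sigma}(k)$, in the conventions of Subsection~\ref{subsec:fourierMinkowski}. Steiner centering gives $\widehat g(\pm1)=0$.

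The right-hand convolution $F(\theta)=2\pi\int G(\theta-\phi)\,\dd\eta(\phi)$, with $\eta=\mu-\sigma$, has coefficients $\widehat F(k)=2\pi\widehat G(k)\widehat\eta(k)$. By \eqref{eq:green-fourier} this equals $\widehat\eta(k)/(1-k^2)$ for $k\ne\pm1$, and it vanishes for $k=\pm1$. So $F$ and $g$ have the same Fourier coefficients. Both are continuous: $G$ is continuous, and support functions are continuous. They therefore agree everywhere.

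Equivalently, one can apply \eqref{eq:green-distribution}. The extra term $-\pi^{-1}\cos$ contributes $-2\int\cos(\theta-\phi)\,\dd\eta$, which vanishes because both $\mu$ and $\sigma$ are balanced. Note that the $k=0$ coefficient needs no separate check: $\widehat\eta(0)=0$, so nothing is lost there.

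\textbf{Step 2: subtract and apply duality.} For two measures $\mu,\nu$ the constants cancel, giving
\[
 h_{K_\mu}(\theta)-h_{K_\nu}(\theta)=2\pi\int_{\Sone}G(\theta-\phi)\,\dd(\mu-\nu)(\phi).
\]
Fix $\theta$. The function $\phi\mapsto G(\theta-\phi)$ has $\Lip_{d_g}\le 1/2$ by \eqref{eq:green-lip}, since $G$ is periodic and even and the geodesic distance is translation-invariant. Thus $2G(\theta-\cdot)$ is $1$-Lipschitz. Duality \eqref{eq:w1-definition} then gives
\[
 |h_{K_\mu}(\theta)-h_{K_\nu}(\theta)|\le 2\pi\cdot\tfrac12\,\Wone(\mu,\nu)=\pi\Wone(\mu,\nu).
\]
Taking the supremum over $\theta$ and using $d_H=\|h_{K_\mu}-h_{K_\nu}\|_\infty$ proves \eqref{eq:stability}.

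\textbf{Main obstacle.} The only delicate point is Step 1: justifying that the Fourier identification determines $h_{K_\mu}$ exactly. This rests on two facts. First, the first harmonics are exactly the kernel of $D^2+1$, and both sides have them removed, by Steiner centering on one side and by $\widehat G(\pm1)=0$ on the other. Second, convolution of the continuous $G$ with a finite measure is continuous, so Fourier uniqueness applies to continuous functions.

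The uniformity statement follows because nothing in the argument uses atoms or concentration. The constant $\pi$ comes solely from $2\pi\cdot\Lip_{d_g}(G)$.
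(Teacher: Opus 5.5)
Your proof is correct and follows the paper's argument. In both, the support-function difference is written as the convolution $2\pi\int G(\theta-\phi)\,\dd(\mu-\nu)(\phi)$, justified by Steiner centering and $\widehat G(\pm1)=0$, and the bound then comes from Kantorovich--Rubinstein duality with $\Lip_{d_g}(G)=1/2$. The only difference is cosmetic: you pass through the disk (the measure $\sigma$) and invoke Fourier uniqueness, whereas the paper works directly with $\eta=\mu-\nu$ and notes that two such solutions differ by an element of the kernel of $D^2+1$ with vanishing first harmonics.
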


\begin{proof}
Let $\eta=\mu-\nu$ and $g=h_{K_\mu}-h_{K_\nu}$. The total mass and the first moments of $\eta$ vanish. Equation~\eqref{eq:green-distribution} shows that
\[
 \widetilde g(\theta)=2\pi\int_{\Sone}G(\theta-\phi)\,\dd\eta(\phi)
\]
satisfies $\widetilde g''+\widetilde g=2\pi\eta$: the cosine correction vanishes by balance. Its first harmonics vanish by~\eqref{eq:green-fourier}. The same equation and normalization hold for $g$. Their difference solves $v''+v=0$ with zero first harmonics, hence $g=\widetilde g$. Using~\eqref{eq:w1-definition} and~\eqref{eq:green-lip},
\[
 |g(\theta)|\le2\pi\Lip_{d_g}(G)\Wone(\mu,\nu)=\pi\Wone(\mu,\nu).
\]
Taking the supremum proves~\eqref{eq:stability}.
\end{proof}

Applying the kernel representation to the regular grid gives, for $N\ge3$, the exact support-function error identity
\begin{equation}\label{eq:shape-error}
 h_{P_N}(\theta)-1
 =2\pi\mathcal E_N\bigl(G(\theta-\,\cdot)\bigr).
\end{equation}
Equations~\eqref{eq:cheb-error} and~\eqref{eq:shape-error}, together with Proposition~\ref{prop:mixedarea}, give the integration, mixed-area, and support-function interpretations of the same quadrature-error family.

A termwise Fourier estimate does not yield the Lipschitz bound directly. For $k\ne0,\pm1$, duality and the Fourier equation give
\[
 |\widehat g(k)|\le\frac{|k|}{|1-k^2|}\Wone(\mu,\nu).
\]
For a complex Fourier mode, the bound follows by taking its real part with a phase that realizes the modulus. These upper bounds are not summable over $k$, so absolute Fourier summation does not give~\eqref{eq:stability}. Estimating the Lipschitz norm of the inverse kernel avoids this loss.

For related reconstruction and stability results in higher dimensions, see Abdallah and M\'erigot~\cite{AbdallahMerigot} and B\"or\"oczky, Machado, and Ramos~\cite{BMR2026}. The latter estimate Hausdorff distance, after optimizing translation, by a power of a dual-convex discrepancy under quantitative nondegeneracy assumptions. That discrepancy is bounded by $\Wone$, since support functions of bodies contained in the unit ball are $1$-Lipschitz. The explicit planar kernel gives a direct estimate for fixed Steiner-centered representatives.

\subsection{Exact discrepancy of the regular grid}
\begin{proposition}[Exact $W_1$ discrepancy]\label{prop:w1}
For every $N\ge1$, with the geodesic metric,
\begin{equation}\label{eq:grid-w1}
 \Wone(\nu_N,\sigma)=\frac\pi{2N}.
\end{equation}
\end{proposition}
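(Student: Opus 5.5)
We prove the two inequalities $\Wone(\nu_N,\sigma)\le\pi/(2N)$ and $\Wone(\nu_N,\sigma)\ge\pi/(2N)$ separately, using the coupling form of~\eqref{eq:w1-definition} for the first and the dual Lipschitz form for the second.

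\emph{Upper bound.} Split the circle into the $N$ closed arcs $A_j=\{\theta:\ d_g(e^{i\theta},u_j)\le\pi/N\}$. Each is centered at $u_j$, has length $2\pi/N$, and they overlap only at endpoints, so $\sigma(A_j)=1/N=\nu_N(\{u_j\})$. Define the coupling $\gamma$ that transports the restriction $\sigma|_{A_j}$ to the atom at $u_j$. Its marginals are $\sigma$ and $\nu_N$. Its cost is
\[
 N\cdot\frac1{2\pi}\int_{-\pi/N}^{\pi/N}|s|\,\dd s=\frac N{2\pi}\cdot\frac{\pi^2}{N^2}=\frac{\pi}{2N}.
\]
Hence $\Wone(\nu_N,\sigma)\le\pi/(2N)$.

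\emph{Lower bound.} Take the test function $f(u)=\min_j d_g(u,u_j)$, the distance to the grid. As a minimum of $1$-Lipschitz functions it is $1$-Lipschitz for $d_g$. It vanishes on the support of $\nu_N$, so $\int f\,\dd\nu_N=0$. Its integral against $\sigma$ equals the transport cost computed above, namely $\pi/(2N)$, because on each $A_j$ we have $f(\theta)=d_g(\theta,u_j)$. By Kantorovich--Rubinstein duality, $\Wone(\nu_N,\sigma)\ge|\int f\,\dd(\nu_N-\sigma)|=\pi/(2N)$.

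\emph{Small $N$.} For $N=1$ the arc $A_0$ is the whole circle, and the same formula gives $\frac1{2\pi}\int_{-\pi}^{\pi}|s|\,\dd s=\pi/2$, which is consistent with the statement. For $N=2$ the arcs are the two half-circles, and the argument applies unchanged.

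The only point needing care is that $d_g(\cdot,u_j)$ really is the geodesic distance on each $A_j$. This holds because $A_j$ has half-width $\pi/N\le\pi$, so there is no wraparound; for $N=1$ the half-width is exactly $\pi$, which is still admissible. Beyond this check, the proof contains no real obstacle: the optimal coupling and the optimal dual function are the same nearest-grid-point structure.
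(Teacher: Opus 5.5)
Your proof is correct and follows the paper's argument: the same Voronoi-cell coupling, with cost $\frac{N}{2\pi}\int_{-\pi/N}^{\pi/N}|s|\,\dd s=\pi/(2N)$, gives the upper bound. The same distance-to-grid test function $d_g(\cdot,X_N)$ gives the matching lower bound via Kantorovich--Rubinstein duality.
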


\begin{proof}
Transport each arc Voronoi cell, of length $2\pi/N$ and $\sigma$-mass $1/N$, to its central node. Its cost summed over the grid is
\[
 \frac N{2\pi}\int_{-\pi/N}^{\pi/N}|\theta|\,\dd\theta=\frac\pi{2N}.
\]
This is a coupling with marginals $\sigma$ and $\nu_N$, proving the upper bound. Conversely, $f(u)=d_g(u,X_N)$ is $1$-Lipschitz, vanishes on $X_N$, and satisfies $\int f\,d\sigma=\pi/(2N)$. The dual formula~\eqref{eq:w1-definition} gives the matching lower bound.
\end{proof}

For $N\ge3$, Propositions~\ref{prop:planar-stability} and~\ref{prop:w1} give
\[
 d_H(P_N,B^2)\le\frac{\pi^2}{2N}.
\]
For the regular grid, Theorem~\ref{thm:hausdorff} gives the sharper $N^{-2}$ rate. The additional order of convergence follows from the sparse grid spectrum and the uniform series bound~\eqref{eq:tail-bound}.

\subsection{Sharpness and balanced perturbations}
\begin{proposition}[Sharpness of the stability exponent]\label{prop:sharp-stability}
For $0<|a|<1$, let $d\mu_a=(1+a\cos2\theta)\,d\sigma$. Then
\begin{equation}\label{eq:sharp-example}
 d_H(K_{\mu_a},B^2)=\frac{|a|}{3},\qquad
 \Wone(\mu_a,\sigma)=\frac{|a|}{\pi}.
\end{equation}
In particular, no estimate $d_H(K_\mu,B^2)\le C\Wone(\mu,\sigma)^\beta$ with $\beta>1$ and fixed $C$ holds uniformly near $\sigma$, even among smooth positive densities converging uniformly to $1$.
\end{proposition}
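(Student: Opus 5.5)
We first check that $\mu_a$ is admissible, then compute the support function of $K_{\mu_a}$ exactly, and finally compute $\Wone(\mu_a,\sigma)$ through the circle CDF formula.

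\emph{Admissibility.} For $|a|<1$ the density $1+a\cos2\theta$ is strictly positive, so $\mu_a$ is nondegenerate. Its first moments vanish, since $\int\cos\theta\cos2\theta\,d\theta=\int\sin\theta\cos2\theta\,d\theta=0$, so $\mu_a$ is balanced. Hence $K_{\mu_a}$ is defined.

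\emph{Hausdorff distance.} The difference $g=h_{K_{\mu_a}}-1$ solves $g''+g=2\pi(\mu_a-\sigma)$. Written as a function, the right-hand side is $a\cos2\theta$, because $d\sigma=d\theta/(2\pi)$. The only nonzero Fourier modes are $k=\pm2$, where the multiplier is $(1-k^2)^{-1}=-1/3$. Steiner centering kills the first harmonics, and the kernel of $D^2+1$ contains only first harmonics. Therefore
\[
h_{K_{\mu_a}}(\theta)=1-\frac a3\cos2\theta .
\]
This also follows from the kernel representation in the proof of Proposition~\ref{prop:planar-stability} together with~\eqref{eq:green-fourier}. Since $d_H(K,B^2)=\|h_K-1\|_\infty$, we get $d_H(K_{\mu_a},B^2)=|a|/3$.

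\emph{Wasserstein distance.} On the circle, $W_1$ with the geodesic metric equals $\min_c\int_0^{2\pi}|F(\theta)-c|\,d\theta$, where $F$ is the cumulative difference of the two measures. I regard this formula as the main point needing justification. Equivalently, I would verify it through the duality~\eqref{eq:w1-definition}. For the upper bound, take $f$ with $f'=-\operatorname{sign}(F-c)$ and integrate by parts. For the lower bound, take the particular 1-Lipschitz test function with $f'=\operatorname{sign}(\sin2\theta)$, which is periodic because its increments over a period cancel.

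Here $F(\theta)=\frac{a}{2\pi}\int_0^\theta\cos2s\,ds=\frac{a}{4\pi}\sin2\theta$, which has mean zero and is symmetric. So the optimal constant is $c=0$, being a median of $F$. This gives
\[
\Wone(\mu_a,\sigma)=\frac{|a|}{4\pi}\int_0^{2\pi}|\sin2\theta|\,d\theta=\frac{|a|}{4\pi}\cdot4=\frac{|a|}{\pi}.
\]

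For the lower bound directly, take $f(\theta)=-\tfrac12\cos2\theta\cdot\operatorname{sign}(a)$. It is 1-Lipschitz in $d_g$, since $|f'|=|\sin2\theta|\le1$, and
\[
\int f\,d(\mu_a-\sigma)=\frac{|a|}{2}\cdot\frac12=\frac{|a|}{4}.
\]
This is weaker than $|a|/\pi$, so the sign test function above is needed for equality. Concretely, $f(\theta)=\operatorname{sign}(a)\,\tilde f(\theta)$, where $\tilde f$ is the periodic triangle wave with slope $\pm1$ that follows $-\operatorname{sign}(\cos2\theta)$. Integrating by parts against $F$ yields exactly $|a|\,\|\sin2\theta\|_{L^1}/(4\pi)=|a|/\pi$. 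For the upper bound, a coupling moving mass monotonically within each quarter period has cost $\int|F|$, which matches. I expect this matching of transport and test function to be the only delicate step; the rest is mechanical.

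\emph{Conclusion.} Both quantities are linear in $|a|$, with $d_H=\frac{\pi}{3}\Wone$. Suppose $d_H\le C\Wone^\beta$ held with $\beta>1$. Then $|a|/3\le C(|a|/\pi)^\beta$ for all small $|a|$, which fails as $a\to0$. The densities $1+a\cos2\theta$ are smooth, positive, and converge uniformly to $1$, giving the claimed sharpness.
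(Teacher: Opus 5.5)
Your proof is correct. The Hausdorff part is the paper's computation run in the opposite direction. You take the body $K_{\mu_a}$ given by Minkowski's theorem and invert the multiplier $(1-k^2)^{-1}=-1/3$ on the modes $k=\pm2$. The paper instead writes down $h_a=1-\frac a3\cos2\theta$, checks $h_a''+h_a=1+a\cos2\theta>0$ so that $h_a$ is a support function, and invokes Minkowski uniqueness; either order works. Your lower bound for $\Wone$ uses the paper's witness, with slope $\pm\operatorname{sgn}(a\sin2\theta)$.

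The real difference is the upper bound. You argue on the primal side. Leave $\mu_a\wedge\sigma$ in place and move $(\mu_a-\sigma)_+$ monotonically onto $(\mu_a-\sigma)_-$ inside each arc $[k\pi/2,(k+1)\pi/2]$. This is legitimate because $F=\frac a{4\pi}\sin2\theta$ vanishes at the endpoints, so each arc carries zero net mass. The one-dimensional formula then gives total cost $\int_0^{2\pi}|F|\,d\theta=|a|/\pi$. The paper stays on the dual side. For every periodic $1$-Lipschitz $f$, integration by parts gives $\bigl|\int f\,d(\mu_a-\sigma)\bigr|=\frac{|a|}{4\pi}\bigl|\int_0^{2\pi}f'\sin2\theta\,d\theta\bigr|\le\frac{|a|}{\pi}$, since $|f'|\le1$ almost everywhere. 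The witness is just the equality case of this bound. The dual route takes one line and needs neither the circle formula $\min_c\int|F-c|$ nor a coupling. Yours has the advantage of exhibiting an optimal transport plan.

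One sentence in your sketch has the logic reversed: ``for the upper bound, take $f$ with $f'=-\operatorname{sign}(F-c)$ and integrate by parts.'' A particular test function only bounds the supremum in~\eqref{eq:w1-definition} from below, so it cannot bound $\Wone$ from above. The dual upper bound is the estimate $\bigl|\int f'F\bigr|=\bigl|\int f'(F-c)\bigr|\le\int|F-c|$ for \emph{every} admissible $f$, using $\int_0^{2\pi}f'=0$. With $c=0$ this is exactly the paper's bound.

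Two smaller slips are harmless because of the absolute value in the duality formula:
\begin{itemize}
\item $f=-\frac12\operatorname{sign}(a)\cos2\theta$ gives $-|a|/4$, not $+|a|/4$.
\item The triangle wave must have slope $\operatorname{sign}(\sin2\theta)$, i.e.\ be in phase with $-\cos2\theta$. A wave whose slope is $-\operatorname{sign}(\cos2\theta)$ integrates to zero against $\mu_a-\sigma$.
\end{itemize}
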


\begin{proof}
The measure is positive and balanced. Its centered support function is
\[
 h_a(\theta)=1-\frac a3\cos2\theta,
 \qquad h_a''+h_a=1+a\cos2\theta>0.
\]
The positivity of $h_a''+h_a$ gives a smooth strictly convex body, and Minkowski uniqueness identifies it with $K_{\mu_a}$. This proves the Hausdorff formula. Every periodic $1$-Lipschitz function is absolutely continuous with $|f'|\le1$ almost everywhere. Integration by parts gives
\[
 \left|\int f\,d(\mu_a-\sigma)\right|
 =\left|-\frac a{4\pi}\int_0^{2\pi}f'(\theta)\sin2\theta\,d\theta\right|
 \le\frac{|a|}{\pi}.
\]
Equality is attained by a periodic function with derivative
$f'(\theta)=-\operatorname{sgn}(a\sin2\theta)$ almost everywhere: this derivative has zero mean, so its primitive is periodic and $1$-Lipschitz for $d_g$. Duality proves the second identity. Thus $d_H=(\pi/3)\Wone$ along this family as $a\to0$, which excludes every exponent $\beta>1$.
\end{proof}

The sharpness statement concerns the exponent. The optimal Lipschitz constant in Proposition~\ref{prop:planar-stability} is not determined here.

\begin{corollary}[Stability of the quadratic grid rate]\label{cor:perturbation}
Let $N\ge3$ and let $\rho_N$ be a balanced probability measure not supported on an antipodal pair. If $\varepsilon_N=\Wone(\rho_N,\nu_N)$, then
\begin{equation}\label{eq:perturbation}
 \left|d_H(K_{\rho_N},B^2)-\left(1-\frac\pi N\cot\frac\pi N\right)\right|
 \le\pi\varepsilon_N.
\end{equation}
Consequently, $\varepsilon_N=O(N^{-2})$ implies $d_H(K_{\rho_N},B^2)=O(N^{-2})$; if $\varepsilon_N=o(N^{-2})$, then
\[
 N^2d_H(K_{\rho_N},B^2)\longrightarrow\frac{\pi^2}{3}.
\]
If $\rho_N=N^{-1}\sum_{j=0}^{N-1}\delta_{\widetilde u_j}$ satisfies the stated assumptions, matching each perturbed node to its original node gives
$\varepsilon_N\le N^{-1}\sum_{j=0}^{N-1}d_g(\widetilde u_j,u_j)$.
\end{corollary}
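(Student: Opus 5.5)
The plan is to combine the triangle inequality for the Hausdorff metric with Proposition~\ref{prop:planar-stability} and Theorem~\ref{thm:hausdorff}. First I will check that both $\rho_N$ and $\nu_N$ lie in the class covered by Proposition~\ref{prop:planar-stability}. The measure $\rho_N$ is balanced and nondegenerate by hypothesis. The measure $\nu_N$ is balanced by degree-one design exactness (Theorem~\ref{thm:design}), and for $N\ge3$ it is not carried by an antipodal pair. Next I will identify $K_{\nu_N}$ with the origin-centered polygon $P_N$. Its surface-area measure is $2\pi\nu_N=\mu_N$, and its Steiner point is the center, since the support function~\eqref{eq:fourier-support} has vanishing first harmonics; equivalently, a regular polygon's Steiner point is its center by symmetry. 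Minkowski uniqueness up to translation, together with the Steiner normalization, then gives $K_{\nu_N}=P_N$.

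Since $d_H$ is a metric on compact convex sets, the reverse triangle inequality gives
\[
 \bigl|d_H(K_{\rho_N},B^2)-d_H(P_N,B^2)\bigr|\le d_H(K_{\rho_N},P_N)\le\pi\,\Wone(\rho_N,\nu_N)=\pi\varepsilon_N,
\]
where the second inequality is Proposition~\ref{prop:planar-stability}. Substituting the exact value $d_H(P_N,B^2)=1-\frac\pi N\cot\frac\pi N$ from Theorem~\ref{thm:hausdorff} yields~\eqref{eq:perturbation}. One point needs care here: $B^2$ is itself $K_\sigma$ with Steiner centering, but the argument only uses the fixed set $B^2$, so nothing beyond the metric property is required.

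For the asymptotic consequences, multiply~\eqref{eq:perturbation} by $N^2$. Theorem~\ref{thm:hausdorff} gives $N^2\bigl(1-\frac\pi N\cot\frac\pi N\bigr)=\frac{\pi^2}{3}+O(N^{-2})$. If $\varepsilon_N=O(N^{-2})$, then $N^2 d_H(K_{\rho_N},B^2)$ stays bounded. If $\varepsilon_N=o(N^{-2})$, then $N^2\pi\varepsilon_N\to0$, so $N^2 d_H(K_{\rho_N},B^2)\to\pi^2/3$.

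For the final claim about perturbed nodes, I will use the coupling $\gamma=N^{-1}\sum_j\delta_{(\widetilde u_j,u_j)}$. Its marginals are $\rho_N$ and $\nu_N$, and this holds even if some nodes coincide, because the masses add consistently on both sides. Inserting $\gamma$ into the infimum in~\eqref{eq:w1-definition} gives $\varepsilon_N\le N^{-1}\sum_j d_g(\widetilde u_j,u_j)$. There is no real obstacle in the argument. The only step that needs checking is the identification $K_{\nu_N}=P_N$, which is required to apply the stability estimate to exactly the normalized polygon of Theorem~\ref{thm:hausdorff}.
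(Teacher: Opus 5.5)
Your proposal is correct and follows the paper's proof essentially verbatim. Both use the reverse triangle inequality together with Proposition~\ref{prop:planar-stability} and Theorem~\ref{thm:hausdorff}, and the node-matching coupling for the final claim. Your explicit check that $\nu_N$ is admissible and that $K_{\nu_N}=P_N$ under Steiner centering is a detail the paper leaves implicit, and you handle it correctly.
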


\begin{proof}
The reverse triangle inequality for Hausdorff distance and~\eqref{eq:stability} give
\[
 |d_H(K_{\rho_N},B^2)-d_H(P_N,B^2)|
 \le d_H(K_{\rho_N},P_N)\le\pi\varepsilon_N.
\]
Use Theorem~\ref{thm:hausdorff}. The node estimate follows from the coupling that matches each perturbed node to its original node.
\end{proof}

The balance condition must be checked after perturbing the nodes: independent angular changes may violate it and thus prevent Minkowski reconstruction. The corollary gives sufficient conditions for preserving the quadratic rate; necessity of these conditions is not asserted.

\medskip
\noindent\textbf{Final Remark.}
The circle model separates the role of exactness from that of inverse stability. The common error-functional representation links quadrature to mixed area and support functions. The general inverse estimate is linear in $W_1$, with sharp exponent, while the sparse spectrum of the regular grid yields quadratic Hausdorff convergence. Corollary~\ref{cor:perturbation} quantifies the persistence of this rate under balanced perturbations.

In higher dimensions, the existence statement requires nondegeneracy as well as balance. Let $X=\{x_j\}_{j=1}^N\subset\mathbb S^d$, $d\ge2$, be a spherical $t$-design with $t\ge2$, and let $\nu_X=N^{-1}\sum_j\delta_{x_j}$. Exactness through degree two gives
\[
 \int x\,d\nu_X(x)=0,\qquad
 \int xx^{\mathsf T}\,d\nu_X(x)=\frac1{d+1}I_{d+1}.
\]
For any nonzero $v$, $\int(v\cdot x)^2\,d\nu_X=|v|^2/(d+1)>0$, so the support is not contained in a great subsphere. Minkowski's theorem therefore produces a full-dimensional polytope $P_X$ with
$S_{P_X}=|\mathbb S^d|\nu_X$, unique up to translation. Degree-one exactness alone is insufficient: the four equatorial points $\{(\pm1,0,0),(0,\pm1,0)\}\subset\mathbb S^2$ form a $1$-design but fail the spanning condition.

The higher-dimensional setting draws on the existence theory of spherical designs~\cite{BRV} and quantitative Minkowski stability~\cite{BMR2026}. Design-induced polytopes, isotropy transfer, and quantitative sphericity are studied in~\cite{An2026}. The present note provides an explicitly solvable planar model and identifies its connection to Gauss--Chebyshev quadrature. In ambient dimension at least three, the smooth reconstruction equation is nonlinear,
\[
 \det\bigl(\nabla_{\mathbb S^d}^2h+hI\bigr)=f,
 \qquad dS_K=f\,d\omega,
\]
where $d\omega$ is unnormalized spherical area and $I$ acts on the tangent space. The polygon formula and linear inverse kernel are specific to the plane; they imply no higher-dimensional $N^{-2}$ reconstruction rate.

\end{document}